\newtheorem{theorem}{Theorem}[section]
\newtheorem{lemma}[theorem]{Lemma}
\theoremstyle{definition}
\theoremstyle{remark}
\numberwithin{equation}{section}
\def\pt{{\hspace{1pt}}} \def\mpt{{\hspace{-1pt}}}
\def\llangle{{\langle \! \langle}} \def\rrangle{{\rangle \! \rangle}}
\def\bfa{{\mathbf a}}
\def\bfF{{\mathbf F}}
\def\bfg{{\mathbf g}}
\def\bfx{{\mathbf x}}
\def\bfy{{\mathbf y}}
\def\dbC{{\mathbb C}}
\def\dbF{{\mathbb F}}
\def\dbK{{\mathbb K}}
\def\dbR{{\mathbb R}}
\def\dbZ{{\mathbb Z}}
\def\alp{{\alpha}}
\def\bet{{\beta}} \def\bfbet{{\boldsymbol \beta}}
\def\gam{{\gamma}} 
\def\del{{\delta}} \def\Del{{\Delta}}  
\def\tet{{\theta}} 
\def\lam{{\lambda}} \def\bflam{{\boldsymbol \lambda}}
\def\sig{{\sigma}}
\def\eps{\varepsilon}
\def\le{\leqslant} \def\ge{\geqslant}
\begin{document}
\title[Diophantine inequalities]{Diophantine inequalities and 
quasi-algebraically closed fields}
\author[C. V. Spencer]{Craig V. Spencer}
\address{CVS: Department of Mathematics, Kansas State University, 138 Cardwell 
Hall, Manhattan, KS 66506, U.S.A.}
\email{cvs@math.ksu.edu}
\author[T. D. Wooley]{Trevor D. Wooley$^*$}
\address{TDW: School of Mathematics, University of Bristol, University Walk,
 Bristol BS8 1TW, United Kingdom}
\email{matdw@bristol.ac.uk}
\thanks{$^*$CVS was supported in part by NSF Grant DMS-0635607 and NSA Young 
Investigators Grant H98230-10-1-0155, and TDW first by an NSF grant, and 
subsequently by a Royal Society Wolfson Research Merit Award.}
\subjclass[2010]{11D75, 11J25, 12F20, 11T55}
\keywords{Diophantine inequalities, function fields, quasi-algebraic closure}
\date{}
\begin{abstract} Consider a form $g(x_1,\ldots ,x_s)$ of degree $d$, having 
coefficients in the completion $\dbF_q(\mpt (1/t)\mpt )$ of the field of 
fractions $\dbF_q(t)$ associated to the finite field $\dbF_q$. We establish that
 whenever $s>d^2$, then the form $g$ takes arbitrarily small values for non-zero
 arguments $\bfx\in \dbF_q[\pt t\pt ]^s$. We provide related results for 
problems involving distribution modulo $\dbF_q[\pt t\pt ]$, and analogous 
conclusions for quasi-algebraically closed fields in general.\end{abstract}
\maketitle

\section{Introduction} A homogeneous polynomial of odd degree, with real 
coefficients, assumes arbitrarily small values at non-zero integral arguments 
provided only that it possesses a number of variables sufficiently large 
compared to its degree. This conclusion of Schmidt \cite{Sch1980} was 
established by means of an argument remarkable both for its ingenuity and its 
sophistication. With a similar assumption on the number of variables, the 
analogous problem of showing that a form of odd degree, with integral 
coefficients, necessarily vanishes, while plainly no harder, turns out to be 
considerably more straightforward (see Birch \cite{Bir1957}). Motivated by 
familiar correspondence philosophies, one anticipates that similar conclusions 
should be accessible in which the role of the integers $\dbZ$ is replaced by 
the polynomial ring $\dbF_q[\pt t\pt ]$, and that of the real numbers $\dbR$ is 
replaced by the Laurent series $\dbF_q(\mpt (1/t)\mpt )$. In this paper we show 
not only that such may be achieved, but that in addition much sharper 
conclusions may be attained with considerable ease. It is our hope that the 
quantitative results recorded herein may shed light on what is to be expected in
 the above classical situation.\par

We begin by introducing some notation. Let $k$ be a field. We say that a zero of
 a polynomial is {\it non-trivial} when it has a non-zero coordinate. We 
refer to a polynomial having zero constant term as a {\it Chevalley polynomial},
 and call a homogeneous polynomial a {\it form}. Associated to $k$ is the 
polynomial ring $k[\pt t\pt]$ and the field of fractions $\dbK=k(t)$. Write 
$\dbK_\infty=k(\mpt (1/t)\mpt )$ for the completion of $k(t)$ at $\infty$. Each 
element $\alp$ in $\dbK_\infty$ may be written in the shape $\alp=\sum_{j\le n}
a_jt^j$ for some $n\in \dbZ$ and coefficients $a_j=a_j(\alp)$ in $k$ $(j\le n)$. 
We define $\text{ord }\alp$ to be the largest integer $j$ for which $a_j(\alp)
\ne 0$. Fixing a real number $\gam$ with $\gam>1$, we then write $\langle \alp 
\rangle$ for $\gam^{\text{ord }\alp}$, and refer to $\langle \alp\rangle$ as the 
{\it magnitude} of $\alp$. In this context we adopt the convention that 
$\text{ord }0=-\infty$ and $\langle 0\rangle =0$. Finally, when $\bfbet=(\bet_1,
\ldots,\bet_n)\in \dbK_\infty^n$, we define $\langle \bfbet \rangle={\displaystyle
{\max_{1\le i\le n}}}\langle \bet_n\rangle $.

\par In this section we concentrate on the situation in which $k$ is a finite 
field $\dbF_q$, deferring to later sections a more general discussion of 
quasi-algebraically closed fields. Our first result is a consequence of Theorem 
\ref{theorem1.1} below.

\begin{theorem}\label{corollary1.2}
Let $k=\dbF_q$, and let $d$ and $s$ be natural numbers with $s>d^2$. Suppose 
that $F(\bfx)\in \dbK_\infty[x_1,\ldots ,x_s]$ is a Chevalley polynomial of degree
 $d$, whose coefficients have magnitude not exceeding the positive number $H$. 
Then, whenever $0<\eps\le \gam^{-d}H$, the inequality $\langle F(\bfx)\rangle <
\eps$ admits a solution $\bfx\in \dbF_q[\pt t\pt ]^s$ with $0<\langle \bfx
\rangle \le (H/\eps)^{d/(s-d^2)}$.
\end{theorem}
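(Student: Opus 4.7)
The plan is to encode the approximation inequality as a system of Chevalley polynomial equations over $\dbF_q$ and then invoke the classical Chevalley--Warning theorem. First I would parametrise the search: set $Y=(H/\eps)^{d/(s-d^2)}$ and $m=\lfloor \log_\gam Y\rfloor +1$, and look for $\bfx$ of the shape $x_i=\sum_{j=0}^{m-1}y_{i,j}\pt t^j$ with $y_{i,j}\in \dbF_q$. This automatically ensures $\langle \bfx\rangle \le \gam^{m-1}\le Y$, and the hypothesis $\eps\le \gam^{-d}H$ forces $Y\ge \gam^{d^2/(s-d^2)}\ge 1$, so $m$ is a positive integer. Expanding $F(\bfx)$ as a formal Laurent series in $t$ gives
\[
F(\bfx)=\sum_{l\in\dbZ}P_l(\bfy)\pt t^l,
\]
where each $P_l\in \dbF_q[\bfy]$ is a polynomial of degree at most $d$ in the $sm$ variables $y_{i,j}$, and since $F$ is Chevalley, every $P_l$ vanishes at the origin.

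Next I would identify the equations imposed by $\langle F(\bfx)\rangle<\eps$. Let $N=\lfloor \log_\gam H\rfloor$, so every coefficient of $F$ has order at most $N$, and let $M=\lceil \log_\gam \eps\rceil$, so that $\gam^{M-1}<\eps$. Then $P_l$ vanishes identically for $l>N+d(m-1)$, and the desired bound is equivalent to $P_l(\bfy)=0$ for every integer $l$ with $M\le l\le N+d(m-1)$. This is a system of at most $R=N-M+1+d(m-1)$ Chevalley polynomial equations of degree at most $d$ in $sm$ variables over $\dbF_q$.

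I would then apply Chevalley--Warning: since $\bfy=\mathbf 0$ is a common zero, the system admits a non-trivial common zero in $\dbF_q^{sm}$ provided $sm>dR$. Rearranging, this becomes $(s-d^2)m>d(N-M+1-d)$. The choice of $m$ yields $(s-d^2)m>(s-d^2)\log_\gam Y=d\log_\gam(H/\eps)$ because $m>\log_\gam Y$, whereas $N\le \log_\gam H$ and $M\ge \log_\gam \eps$ give $d(N-M+1-d)\le d\log_\gam(H/\eps)+d(1-d)\le d\log_\gam(H/\eps)$ for every $d\ge 1$. The required strict inequality follows, and any non-zero $\bfy\in \dbF_q^{sm}$ delivers a non-zero $\bfx\in \dbF_q[\pt t\pt]^s$ by $\dbF_q$-linear independence of $1,t,\ldots ,t^{m-1}$.

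The main technical obstacle is the bookkeeping in this third step: the exponent $d/(s-d^2)$ is precisely the one for which the Chevalley--Warning pigeonhole saturates, so every integer rounding in the definitions of $N$, $M$ and $m$, together with the boundary term $-d$ in $(s-d^2)m>d(N-M+1-d)$, must line up simultaneously with the upper constraint $m\le \log_\gam Y+1$. The hypothesis $\eps\le \gam^{-d}H$ earns its keep here by securing $Y\ge 1$, so that $m$ is a positive integer, and by providing $\log_\gam(H/\eps)\ge d$, which is exactly the slack needed to absorb the residual boundary terms that appear when the floor and ceiling functions are discarded above.
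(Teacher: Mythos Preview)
Your proof is correct and follows essentially the same route as the paper. The paper derives this theorem as the special case $r=1$, $i=1$ of a more general result over $C_i^*$-fields, proved by the identical device: parametrise $x_n=\sum_{j=0}^{B}y_{nj}t^j$, expand $F(\bfx)$ as a Laurent series in $t$, impose vanishing on the coefficients of order at least $-M$ (your $M$), and invoke the $C_i^*$-property (in your case Chevalley--Warning directly) to find a non-trivial $\bfy$ once the variable count exceeds $d$ times the equation count; the bookkeeping with $h=\lfloor\log_\gam H\rfloor$ and the choice of $B$ matches yours up to notation.
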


The conclusion of Theorem \ref{corollary1.2} may be compared with work of the 
first author \cite{Spe2007}, where a variant of the Davenport-Heilbronn method 
is applied to investigate the solubility of diagonal diophantine inequalities in
 the function field setting. Let $F(\bfx)\in \dbK_\infty [x_1,\ldots,x_s]$ be a 
diagonal form of degree $d$ whose coefficients are not all in 
$\dbF_q(t)$-rational ratio. Suppose also that the characteristic of $\dbF_q$ 
does not divide $d$, and that the corresponding equation $F(\bfx)=0$ has a 
non-trivial solution over $\dbK_\infty^s$ (a local solubility condition). Then as
 a consequence of Theorem 1.1 of \cite{Spe2007}, when $d$ is large and $s\ge 
(4/3+o(1))d\log d$, it follows that for each $\eps>0$, the inequality $\langle 
F(\bfx)\rangle <\eps$ possesses infinitely many primitive solutions $\bfx \in 
\dbF_q[\pt t\pt ]^s$. Our theorem requires a larger number of variables in order
 to be applicable, but in compensation it addresses general homogeneous 
polynomials, and also supplies an upper bound for the smallest non-trivial 
solution. We note that Hsu \cite{Hsu1999}, \cite{Hsu2001} has examined diagonal 
diophantine inequalities for polynomial rings in which the variables are 
restricted to be irreducibles. The conclusions available in this situation 
resemble those of \cite{Spe2007}, save that the number of variables employed is 
rather larger.\par

As we have already noted, the classical analogue of Theorem \ref{corollary1.2}, 
in which $\dbR$ replaces $\dbK_\infty$ and $\dbZ$ replaces $\dbF_q[\pt t\pt ]$, 
is far more difficult to analyse. The results of Schmidt \cite{Sch1980} are 
explicit neither in the number of variables required to guarantee the existence 
of a solution, nor in terms of the size of the solutions delivered. Freeman 
\cite{Fre2004} has shown that for a given system of $r$ cubic diophantine 
inequalities in the classical setting, the existence of solutions is assured 
whenever $s>(10r)^{(10r)^5}$, but apparently no explicit conclusions are available
 for general forms of higher degree.\par

We turn next to consider the extent to which the bounds on solutions presented 
in Theorem \ref{corollary1.2} can be considered sharp.

\begin{theorem}\label{corollary1.4}
Let $k=\dbF_q$, and let $d$ and $s$ be natural numbers with $s>d^2$. Then there 
exist arbitrarily large numbers $H$, and forms $F(\bfx;H)\in \dbK_\infty [\bfx]$,
 of degree $d$ in $s$ variables, satisfying the following properties:
\begin{enumerate}[(a)]
\item the coefficients of $F$ each have magnitude not exceeding $H$, and
\item the smallest non-trivial solution $\bfx\in \dbF_q[\pt t\pt]^s$ of the 
inequality $\langle F(\bfx;H)\rangle <1$ satisfies the bound $\langle \bfx
\rangle \ge (\gam^{1-d}H)^{d/(s-d^2)}$.
\end{enumerate}
\end{theorem}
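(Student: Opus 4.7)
The plan is to construct, for arbitrarily large $H$, an explicit form $F\in\dbF_q[\pt t\pt][\bfx]$ of degree $d$ in $s$ variables with coefficients of magnitude at most $H$, and then verify that its smallest non-trivial zero in $\dbF_q[\pt t\pt]^s$ attains the asserted lower bound. Since $F$ has polynomial coefficients, we have $F(\bfx)\in\dbF_q[\pt t\pt]$ whenever $\bfx\in\dbF_q[\pt t\pt]^s$, so $\langle F(\bfx)\rangle<1$ reduces to the algebraic condition $F(\bfx)=0$, and the claim becomes a statement about the smallest non-trivial $\dbF_q[\pt t\pt]$-zero of $F$.

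For the parameter setup, I would set $H=\gam^h$, choosing $h$ in a suitable arithmetic progression so that $m:=d(h+1-d)/(s-d^2)$ is a positive integer. Then $(\gam^{1-d}H)^{d/(s-d^2)}=\gam^m$, so the goal becomes to produce $F$ with no nonzero $\bfx\in\dbF_q[\pt t\pt]^s$ satisfying $\deg x_i\le m-1$ for all $i$ and $F(\bfx)=0$. For such $\bfx$, the value $F(\bfx)$ has $t$-degree at most $d(m-1)+h$, so $F(\bfx)=0$ amounts to $dm+h+1-d$ homogeneous $\dbF_q$-polynomial equations of degree $d$ in the $sm$ coefficient-variables of $\bfx$. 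The identity $sm = d(dm+h+1-d)$, which holds by our choice of $m$, places this system exactly at the Chevalley--Warning threshold, where dimension-counting no longer forces a non-trivial zero, and sharpness becomes attainable in principle.

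For the explicit construction I would draw on the classical sharpness of Chevalley--Warning at $n=d\mu$, where a degree-$d$ polynomial map $\dbF_q^{d\mu}\to\dbF_q^{\mu}$ with only the trivial zero can be built as a direct sum of $\mu$ norm forms from $\dbF_{q^d}$ to $\dbF_q$. A natural attempt is $F(\bfx)=\sum_\ell t^{A_\ell}N(\sum_i\alpha_{i\ell}x_i)$, where $N=N_{\dbF_{q^d}(t)/\dbF_q(t)}$ and the $t$-shifts $A_\ell$ are chosen so that, for $\bfx$ with $\deg x_i\le m-1$, the summands occupy disjoint $t$-degree windows; then $F(\bfx)=0$ would force each summand (and hence each linear form $\sum_i\alpha_{i\ell}x_i$) to vanish, reducing matters to Minkowski-type bounds in $\dbF_{q^d}[\pt t\pt]^{\oplus r}$.

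The main obstacle is that this naive disjoint-window construction achieves only $m\sim h/(s-d)$ under the coefficient-magnitude bound, which is strictly weaker than the target $m=d(h+1-d)/(s-d^2)$ whenever $d>1$. To reach the Chevalley--Warning threshold one must exploit the non-linear structure of the coefficient-map $\Phi_F:\dbF_q^{sm}\to\dbF_q^{dm+h+1-d}$ more subtly; because the identity $sm=d(dm+h+1-d)$ leaves no slack, the construction must saturate the count exactly. Showing that a suitable $F$ attains this bound---likely via a finer use of norm forms combined with the interaction between $t$-adic valuation and the Frobenius action on $\dbF_{q^d}$, or via a sharpness construction tailored to the specific family of polynomial maps arising from degree-$d$ forms---is the technical heart of the argument.
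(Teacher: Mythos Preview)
Your setup is sound: reducing to $F(\bfx)=0$ for $F$ with coefficients in $\dbF_q[\pt t\pt]$ is correct, and your dimension count at the Chevalley--Warning threshold is the right heuristic. Your norm-form-plus-$t$-shift idea is also on the right track, and in fact the paper uses exactly the block form
\[
\Phi_0(y_1,\dots,y_{d^2})=\sum_{j=0}^{d-1}t^j\,\Psi(y_{jd+1},\dots,y_{jd+d}),
\]
with $\Psi$ a degree-$d$ norm form, as the anisotropic core in $d^2$ variables over $\dbF_q[\pt t\pt]$.

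The genuine gap is the step you flag yourself: you correctly diagnose that applying this construction directly in $s$ variables via generic linear combinations only yields the weaker exponent $1/(s-d)$, and you leave the passage from $d^2$ to $s$ variables unresolved. Your proposed remedies (finer Frobenius interactions, sharper polynomial-map constructions) are not what is needed. The missing idea is a Cassels-type substitution: one takes $F(\bfx)=\Phi_0(L_1(\bfx),\dots,L_{d^2}(\bfx))$ where the $L_u$ are linear forms in $x_1,\dots,x_s$ whose coefficients are carefully built as products of pairwise distinct monic irreducibles. Specifically, one chooses irreducibles $\pi_{uwl}$ of common degree $\delta$, sets $\varpi_{uw}=\prod_l\pi_{uwl}$, and takes the coefficient of $x_v$ in $L_u$ to be $\prod_{w\ne v}\varpi_{uw}$. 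Then any solution of $L_u(\bfx)=0$ forces $\varpi_{uv}\mid x_v$ for each $v$, so a nonzero $x_v$ must be divisible by $\varpi_{1v}\cdots\varpi_{d^2,v}$, which has degree $d^2\cdot h\delta$. Meanwhile the coefficients of $F$ have magnitude at most $\gam^{d-1}(\gam^{h\delta})^{d(s-d^2)}$, and equating this with $H$ gives exactly $\langle\bfx\rangle\ge(\gam^{1-d}H)^{d/(s-d^2)}$.

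The point is that the sharp bound does not come from saturating Chevalley--Warning over $\dbF_q$ directly, but from first building an anisotropic form over $\dbF_q[\pt t\pt]$ in $d^2$ variables (your norm-form idea) and then passing to $s$ variables through linear forms whose arithmetic forces divisibility. This second step is what your proposal lacks.
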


This result, which is a consequence of the more general result recorded in 
Theorem \ref{theorem1.4} below, shows that the conclusion of Theorem 
\ref{corollary1.2} is essentially best possible in circumstances wherein 
$\eps =1$. More general values of $\eps$ may also be addressed via Theorem 
\ref{corollary1.4} by simply rescaling the coefficients of the polynomial $F$. 
Further remarks on such lower bounds are offered in section 4.\par
  
We now turn our attention to problems analogous to those in the classical 
literature concerned with the distribution of polynomial sequences modulo $1$. 
Given $\alp \in \dbK_\infty$, we define $\llangle \alp\rrangle ={\displaystyle{
\min_{x\in k[\pt t\pt ]}}}\langle \alp -x\rangle $. As a special case of Theorem 
\ref{theorem1.6}, we derive the following conclusion.

\begin{theorem}\label{corollary1.7}
Let $k=\dbF_q$, and suppose that $f(x)\in \dbK_\infty [x]$ is a Chevalley 
polynomial of degree $d$. Then for each positive number $N$, there exists a 
non-zero polynomial $x\in k[\pt t\pt]$, with $\langle x\rangle \le N$, for which
 $\llangle f(x)\rrangle <N^{-1/d}$. 
\end{theorem}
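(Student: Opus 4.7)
The conclusion will follow as a specialization of the general distribution statement Theorem~\ref{theorem1.6}, whose proof rests in turn on Theorem~\ref{corollary1.2}. Writing $f(x)=a_1 x+a_2 x^2+\cdots+a_d x^d$, I begin with a normalization. Each $a_i\in\dbK_\infty$ decomposes uniquely as $a_i=b_i+c_i$ with $b_i\in k[\pt t\pt]$ the polynomial part and $\langle c_i\rangle\le\gam^{-1}$. For $x\in k[\pt t\pt]$ one has $\sum_i b_i x^i\in k[\pt t\pt]$, so $\llangle f(x)\rrangle$ is unchanged upon replacing $f$ by $\sum_i c_i x^i$; we may therefore assume throughout that $\langle a_i\rangle\le\gam^{-1}<1$ for $1\le i\le d$.

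Next, I introduce the auxiliary Chevalley polynomial
\[
F(x_0,x_1,\ldots,x_{d^2})=f(x_0)-x_1-x_2-\cdots-x_{d^2},
\]
of degree $d$ in $s=d^2+1$ variables, whose coefficients all have magnitude bounded by $H=1$. The range $N<\gam^d$ is disposed of trivially, since any $x\in\dbF_q^\times$ satisfies $\llangle f(x)\rrangle\le\gam^{-1}<N^{-1/d}$; the intermediate range $\gam^d\le N<\gam^{d^2}$ I would handle either by applying Theorem~\ref{corollary1.2} with a slightly larger number of auxiliary variables $x_1,\ldots,x_{d^2+k}$ (which lowers the admissibility threshold on $N$ while sharpening the resulting bound) or by an appeal to Theorem~\ref{theorem1.6} in its full strength. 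For $N\ge\gam^{d^2}$, take $\eps=N^{-1/d}$, which satisfies $\eps\le\gam^{-d}H$, and apply Theorem~\ref{corollary1.2} to produce a non-trivial $\bfx=(x_0,x_1,\ldots,x_{d^2})\in\dbF_q[\pt t\pt]^{s}$ with $\langle\bfx\rangle\le(H/\eps)^{d/(s-d^2)}=N$ and $\langle F(\bfx)\rangle<N^{-1/d}$. Since $x_1+\cdots+x_{d^2}\in\dbF_q[\pt t\pt]$, this yields $\llangle f(x_0)\rrangle\le\langle F(\bfx)\rangle<N^{-1/d}$, which is the desired inequality.

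The main obstacle is to ensure that $x_0\ne 0$ in the solution delivered by Theorem~\ref{corollary1.2}. If instead $x_0=0$, the polynomial $F(\bfx)=-(x_1+\cdots+x_{d^2})$ already lies in $\dbF_q[\pt t\pt]$, and the bound $\langle F\rangle<1$ forces this sum to vanish identically. The degenerate locus $\{x_0=0,\ x_1+\cdots+x_{d^2}=0\}$ is a proper $\dbF_q$-linear subspace of the space of bounded candidate solutions, and I would escape it either by refining the construction of $F$—replacing the sum $x_1+\cdots+x_{d^2}$ by $\sum_{j=1}^{d^2}\bet_j x_j$ for $\bet_j\in\dbF_q[\pt t\pt]$ chosen so that no short non-trivial relation $\sum_j\bet_j x_j=0$ is possible within the relevant height window—or by extracting from the pigeonhole mechanism underlying the proof of Theorem~\ref{theorem1.6} a bounded solution lying off this degenerate subspace. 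Resolving this non-triviality issue cleanly is the step I expect to require the most care.
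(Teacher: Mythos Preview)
Your route is more circuitous than the paper's, and the detour is precisely what creates the gap you flag at the end.

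The paper obtains Theorem~\ref{corollary1.7} as the case $s=r=1$, $i=1$ of Theorem~\ref{theorem1.6}. Crucially, Theorem~\ref{theorem1.6} is \emph{not} deduced from Theorem~\ref{corollary1.2} (contrary to your opening sentence); it is proved by a parallel but distinct argument. In that argument one writes $x=y_0+y_1t+\cdots+y_Bt^B$, expands $f(x)=\sum_{m\le dB+h}G_m(\bfy)t^m$, and imposes only the ``fractional part'' conditions $G_m(\bfy)=0$ for $-M<m<0$. This is a system of $M-1$ Chevalley equations of degree at most $d$ in $B+1$ variables over the $C_1^*$-field $\dbF_q$, so Lemma~\ref{lemma2.1} supplies a non-trivial $\bfy$ once $B+1>(M-1)d$; choosing $B$ maximal with $\gam^B\le N$ and $M$ accordingly yields $\llangle f(x)\rrangle\le\gam^{-M}<N^{-1/d}$. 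Since the $y_l$ are literally the $t$-coefficients of $x$, non-triviality of $\bfy$ is \emph{equivalent} to $x\ne 0$: no degeneracy issue ever arises.

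Your approach instead embeds $f$ into $F(x_0,\ldots,x_{d^2})=f(x_0)-x_1-\cdots-x_{d^2}$ and invokes Theorem~\ref{corollary1.2}. The exponent bookkeeping is fine, but the auxiliary variables $x_1,\ldots,x_{d^2}$ are exactly what manufacture the $x_0=0$ problem. Your first proposed fix (replacing $\sum x_j$ by $\sum\bet_j x_j$ with $\bet_j$ chosen to preclude relations within the height window) does not work without cost: to forbid all non-trivial relations $\sum\bet_j x_j=0$ with $\langle x_j\rangle\le N$ one must take $\langle\bet_j\rangle$ large, which inflates $H$ and destroys the bound $\langle\bfx\rangle\le N$. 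Your second fix (``extracting from the pigeonhole mechanism underlying Theorem~\ref{theorem1.6}'') is, once unpacked, precisely the paper's argument---at which point the auxiliary variables are superfluous and should be discarded. In short, drop the $x_j$'s and apply Theorem~\ref{theorem1.6} with $s=1$ directly; the non-triviality you worry about is then automatic.
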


Define $\| \alp\|$ for $\alp\in \dbR$ by putting $\|\alp\|=\min_{y\in \dbZ}
|\alp-y|$, so that $\|\cdot\|$ is the classical analogue of $\llangle \cdot 
\rrangle$. Also, let $f(t)\in \dbR[\pt t\pt ]$ be a Chevalley polynomial of 
degree $d$. Then, beginning with work of Vinogradov \cite{Vin1927} in the 
special case $f(t)=\alp t^d$, a host of authors have established estimates of 
the type
$$\min_{1\le n\le N}\|f(n)\| \ll_{d,\eps}N^{\eps-\sig(d)},$$
valid for each positive number $\eps$, in which $\sig(d)$ is a suitable positive
 exponent. The current state of the art is given by the permissible exponents 
$\sig(d)=2^{1-d}$ (Schmidt \cite{Sch1977} for $d=2$, and R. C. Baker 
\cite{Bak1982}, \cite{Bak1992} for $d\ge 3$), and $\sig(d)=S(d)^{-1}$ for a 
certain exponent $S(d)$ with $S(d)\sim 4d^2\log d$ (see Corollary 1.3 of Wooley 
\cite{Woo1992}). The conclusion of Theorem \ref{corollary1.7} is therefore 
rather sharper than conclusions available in the analogous classical situation 
whenever $d>2$. In Theorem \ref{theorem1.6} below, we offer more general 
conclusions. These may be compared with results in Chapter 10 of \cite{Bak1986} 
that address situations in which the polynomials $F_j$ are either quadratic or 
diagonal forms.

\section{Quasi-algebraically closed fields}
Our conclusions extend to cover function fields in which the field of constants 
is any quasi-algebraically closed field. In this context we recall the language 
of Lang \cite{Lan1952}, and introduce some of our own. We say that $k$ is a {\it
 strongly $C_i$-field}, or more briefly a {\it $C_i^*$-field}, when any 
Chevalley polynomial of positive degree $d$ lying in $k[\bfx]$, having more than
 $d^i$ variables, necessarily possesses a non-trivial $k$-rational zero. When 
such a conclusion holds only for forms, we say instead that $k$ is a {\it 
$C_i$-field}. In this terminology, algebraically closed fields such as $\dbC$ 
are $C_0^*$-fields, and from the Chevalley-Warning theorem (see \cite{Che1936} 
and \cite{War1936}) it follows that the finite field $\dbF_q$ having $q$ 
elements is a $C_1^*$-field. Work of Lang \cite{Lan1952} and Nagata 
\cite{Nag1957}, moreover, shows that algebraic extensions of $C_i^*$-fields are 
$C_i^*$, and that a transcendental extension, of transcendence degree $j$, over 
a $C_i^*$-field is $C^*_{i+j}$. The same conclusions hold in the absence of 
asterisk decorations.\par

In this section we recall elements of $C_i$-theory relevant to our 
subsequent arguments.

\begin{lemma}\label{lemma2.1}
Let $k$ be a $C_i^*$-field, and suppose that for $1\le j\le r$, the polynomial 
$g_j(\bfx)\in k[x_1,\ldots ,x_s]$ is Chevalley of degree at most $d$. Suppose 
also that $s>rd^i$. Then the system of equations $g_j(\bfx)=0$ $(1\le j\le r)$ 
possesses a non-trivial $k$-rational solution. When $k$ is merely a $C_i$-field,
 the same conclusion holds provided that the polynomials $g_j$ are forms.
\end{lemma}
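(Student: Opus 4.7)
My plan is to reduce the system of $r$ Chevalley polynomials to a single polynomial over $k$ via a norm form construction due to Lang \cite{Lan1952}, to which the defining property of a $C_i^*$-field can then be applied directly. The central ingredient is a \emph{normic form}: a form $\Phi(z_1, \ldots, z_N) \in k[\bfz]$ of degree $m$ in $N = m^i$ variables whose only zero in $k^N$ is the trivial one. When $i = 1$ such a form is furnished by the norm of a degree-$r$ extension: if $K/k$ has degree $r$ with $k$-basis $\omega_1, \ldots, \omega_r$, then $\Phi(z_1, \ldots, z_r) = N_{K/k}(\omega_1 z_1 + \cdots + \omega_r z_r)$ is a form of degree $r$ in $r$ variables over $k$ with no nontrivial zero. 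For general $i$, normic forms of admissible degrees are built by iterating norms along a tower of field extensions, using the fact \cite{Nag1957} that algebraic extensions of $C_i^*$-fields remain $C_i^*$.

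Given a normic form $\Phi$ of degree $m$ in $N \ge r$ variables, I would set
$$P(\bfx) = \Phi(g_1(\bfx), \ldots, g_r(\bfx), 0, \ldots, 0),$$
padding the argument list with zeros if $N > r$. This $P$ is a Chevalley polynomial of degree $\le md$ over $k$ in $s$ variables, and the only-trivial-zero property of $\Phi$ guarantees that any nontrivial zero $\bfx^* \in k^s$ of $P$ automatically satisfies $g_j(\bfx^*) = 0$ for every $j$, which is the desired common zero. To produce such an $\bfx^*$ via the $C_i^*$ property of $k$, one needs $s > (md)^i = Nd^i$; in the optimal case $N = r = m^i$ this matches the hypothesis $s > rd^i$ precisely. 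The form case over a $C_i$-field is entirely parallel, observing that $P$ is itself a form whenever all the $g_j$ are.

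The main obstacle is accommodating values of $r$ that are not perfect $i$-th powers: a naive choice $N = \lceil r^{1/i}\rceil^i$ gives only the stronger hypothesis $s > Nd^i$ rather than the desired $s > rd^i$. The resolution requires a more delicate normic form tailored to $r$, built by a careful iteration of norm forms across suitably chosen towers of extensions. This is the technical heart of the Lang--Nagata theory, and in a write-up I would invoke the existence of the appropriate normic form directly from \cite{Lan1952} and \cite{Nag1957}.
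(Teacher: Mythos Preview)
The paper's own proof is a one-line citation: ``This is Theorem~1b of Nagata \cite{Nag1957}.'' Your proposal instead sketches an argument, so there is real content to compare.

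Your sketch is essentially Lang's original normic-form method from \cite{Lan1952}. That method is sound \emph{when the requisite normic forms exist}, but this is exactly where your proposal has a genuine gap. A normic form of order $i$ and degree $m$ is a form in $m^i$ variables with only the trivial zero; such forms need not exist over an arbitrary $C_i^*$-field. The cleanest counterexample: an algebraically closed field $k$ is $C_0^*$, hence also $C_1^*$; but $k$ admits no normic form of order $1$ and degree $m\ge 2$, because every form in $m\ge 2$ variables over $k$ has a nontrivial zero, and $k$ has no proper finite extensions to supply a norm form. More generally, a $C_i^*$-field may fail to admit extensions of the degrees your tower construction requires, and the non-existence of normic forms of order $i$ does not force $k$ to be $C_{i-1}^*$, so one cannot simply descend by induction on $i$.

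This distinction is precisely why the paper records two separate lemmas: Lemma~\ref{lemma2.1} (Nagata, no normic hypothesis) and Lemma~\ref{lemma2.2} (Lang, with the explicit hypothesis that normic forms of order $i$ exist in every degree). Your final paragraph says you would ``invoke the existence of the appropriate normic form directly from \cite{Lan1952} and \cite{Nag1957},'' but Nagata's paper does not supply normic forms; its whole point is to \emph{bypass} them. Nagata's Theorem~1b is proved by an amplification argument: assuming the $g_j$ have no common nontrivial zero, one iteratively substitutes the system into itself across independent blocks of variables to manufacture a single form of large degree $D$ in more than $D^i$ variables with only the trivial zero, contradicting the $C_i$ property directly. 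No normic form of any specific degree is needed. If you want to give an actual proof rather than a citation, that is the mechanism to describe; the normic-form route proves only the weaker Lemma~\ref{lemma2.2}.
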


\begin{proof} This is Theorem 1b of Nagata \cite{Nag1957}.\end{proof}

Note that when $k$ is a $C_i$-field, then it is a consequence of Lemma 
\ref{lemma2.1} that $k$ is a $C^*_{i+1}$-field. For if $g(\bfx)\in k[x_1,\ldots 
,x_s]$ is a Chevalley polynomial of degree $d$, then one may write $g$ in the 
shape $g(\bfx)=g_1(\bfx)+\ldots +g_d(\bfx)$, where each $g_j$ is homogeneous of 
degree $j$. In particular, the equation $g(\bfx)=0$ has a non-trivial 
$k$-rational solution provided only that the system $g_j(\bfx)=0$ $(1\le j\le 
d)$ has such a solution. But the latter is a system of $d$ simultaneous 
homogeneous equations of degree at most $d$, and by Lemma \ref{lemma2.1} this 
system has a non-trivial $k$-rational solution whenever $s>d^{i+1}$, thereby 
confirming our earlier claim.\par

We say that a form $\Psi(\bfx)\in k[x_1,\ldots ,x_s]$ is {\it normic} when it 
satisfies the property that the equation $\Psi(\bfx)=0$ has only the trivial 
solution $\bfx={\bf 0}$. When such is the case, and the form $\Psi(\bfx)$ has 
degree $d$ and contains $d^i$ variables, then we say that $\Psi$ is {\it normic 
of order $i$}. Plainly, when $k$ is a $C_i$-field, any normic form $\Psi(\bfx)$ 
of degree $d$ can have at most $d^i$ variables. We note also that when 
$k=\dbF_q$, then for each natural number $d$ there exist normic forms of degree 
$d$ possessing precisely $d$ variables. In order to exhibit such a form, 
consider a field extension $L$ of $\dbF_q$ of degree $d$, and examine the norm 
form $\Psi(\bfx)$ defined by considering the norm map from $L$ to $\dbF_q$ with 
respect to a coordinate basis for the field extension of $L$ over $\dbF_q$.\par

When $m$ is a non-negative integer, and $F_1,\ldots ,F_r\in \dbK_\infty [x_1,
\ldots ,x_s]$, it is convenient to define $D_m(\bfF)=D_m(F_1,\ldots ,F_r)$ by 
putting
$$D_m(F_1,\dots ,F_r)=(\deg F_1)^m+\ldots +(\deg F_r)^m.$$ 

\begin{lemma}\label{lemma2.2} Let $k$ be a $C_i^*$-field, and suppose that for 
$1\le j\le r$, the polynomial $g_j(\bfx)\in k[x_1,\ldots ,x_s]$ is Chevalley. 
Suppose also that there are normic forms over $k$ of order $i$ of each positive
 degree. Then whenever $s>D_i(\bfg)$, the system of equations $g_j(\bfx)=0$ 
$(1\le j\le r)$ possesses a non-trivial $k$-rational solution. When $k$ is 
merely a $C_i$-field, the same conclusion holds provided that the polynomials 
$g_j$ are forms.
\end{lemma}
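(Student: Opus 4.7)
The plan is to prove Lemma \ref{lemma2.2} by induction on $r$, paralleling the inductive argument underlying Lemma \ref{lemma2.1}, but exploiting the extra hypothesis that normic forms of order $i$ of each positive degree are available.

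The base case $r=1$ is immediate: the assumption $s>D_i(\bfg)=d_1^i$ is exactly the $C_i^*$-condition (or, for $g_1$ a form, the $C_i$-condition), and thus supplies a non-trivial $k$-rational zero of $g_1$.

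For the inductive step, I would reduce the system $g_1,\ldots,g_r$ to one to which Lemma \ref{lemma2.1} can be applied, by padding the polynomials of lower degree up to the common degree $d:=\max_\ell d_\ell$ using normic forms. Concretely, for each $j$ with $d_j<d$, introduce fresh auxiliary variables $\bfz^{(j)}=(z_1^{(j)},\ldots,z_{(d-d_j)^i}^{(j)})$ and let $\Psi_j(\bfz^{(j)})$ be a normic form of order $i$ and degree $d-d_j$ (furnished by hypothesis). Form the Chevalley polynomial $G_j(\bfx,\bfz^{(j)})=g_j(\bfx)\,\Psi_j(\bfz^{(j)})$, of degree exactly $d$, and let $G_j=g_j$ when $d_j=d$. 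Lemma \ref{lemma2.1} applied to the resulting system of $r$ Chevalley polynomials, all of degree $d$, in $s+\sum_{j}(d-d_j)^i$ variables, produces a non-trivial common zero provided this variable total exceeds $rd^i$. For $i=1$, this condition reduces exactly to $s>D_1(\bfg)=\sum_j d_j$, and the normic property of each $\Psi_j$ forces either $g_j(\bfx)=0$ or $\bfz^{(j)}=\mathbf{0}$ at the extracted zero, from which a non-trivial common zero of the original system can be read off by induction on the number of indices $j$ with $\bfz^{(j)}=\mathbf{0}$.

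The main obstacle is that, for $i\ge 2$, this direct padding is insufficient: by convexity one has $d^i>d_j^i+(d-d_j)^i$, so that $rd^i-\sum_j(d-d_j)^i$ strictly exceeds $\sum_j d_j^i$, and applying Lemma \ref{lemma2.1} in this way demands more variables than the hypothesis supplies. To close this gap I would replace the one-step padding by an iterative construction in which each polynomial $g_j$ is individually amplified using a normic form of its own degree $d_j$ (in $d_j^i$ auxiliary variables), so that its contribution to the total variable cost is $d_j^i$ rather than $(d-d_j)^i$. The technical heart of the argument is arranging these amplifications so that the enlarged system remains amenable to Lemma \ref{lemma2.1} or to the inductive hypothesis, and verifying that the normic properties of the auxiliary forms still force the desired common zero of the original system; this is precisely where the assumption that normic forms of order $i$ exist in \emph{every} positive degree is indispensable.
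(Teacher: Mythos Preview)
The paper does not give its own argument here; it simply cites Theorem~4 of Lang \cite{Lan1952} for the $C_i$ case and remarks that the same proof goes through verbatim for $C_i^*$-fields. So there is nothing in the paper to compare against beyond Lang's original proof.

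Your proposal, however, has a genuine gap already in the step you present as settled for $i=1$. The product construction $G_j(\bfx,\bfz^{(j)})=g_j(\bfx)\,\Psi_j(\bfz^{(j)})$ does not force $g_j(\bfx)=0$ at a common zero of the $G_j$. Since $\Psi_j$ is a form of positive degree, setting $\bfz^{(j)}=\mathbf 0$ makes $G_j$ vanish identically in $\bfx$; hence \emph{any} $\bfx\ne\mathbf 0$, together with $\bfz^{(j)}=\mathbf 0$ for every $j$, is already a non-trivial common zero of $G_1,\ldots,G_r$, and from it one learns nothing whatsoever about the values $g_j(\bfx)$. The disjunction ``$g_j(\bfx)=0$ or $\bfz^{(j)}=\mathbf 0$'' that you correctly extract is simply too weak, and your proposed ``induction on the number of indices $j$ with $\bfz^{(j)}=\mathbf 0$'' cannot repair this, since that number may equal $r$ from the outset.

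Lang's mechanism uses normic forms by \emph{composition}, not multiplication. One introduces disjoint copies $\bfx^{(1)},\ldots,\bfx^{(N)}$ of the variable vector (with $N=e^i$ for a suitably chosen degree $e$), replaces a form $f$ by the single form $\psi\bigl(f(\bfx^{(1)}),\ldots,f(\bfx^{(N)})\bigr)$ with $\psi$ normic of degree $e$ in $N$ variables, and replaces each remaining $g_j$ by its $N$ copies $g_j(\bfx^{(l)})$. Normicity now yields the \emph{conjunction} $f(\bfx^{(l)})=0$ for all $l$, and non-triviality of the zero forces some block $\bfx^{(l)}\ne\mathbf 0$ to be a genuine common zero of the original system. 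Iterating this device with normic forms of carefully chosen degrees is exactly what drives the variable threshold down to $D_i(\bfg)$, and is where the hypothesis that normic forms of order $i$ exist in every degree is actually used. Your sketch for $i\ge 2$ would have to be rebuilt along these lines; as written it is both too vague and resting on the broken $i=1$ step.
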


\begin{proof} This is Theorem 4 of Lang \cite{Lan1952} when $k$ is a 
$C_i$-field, whilst the argument of the proof of this theorem delivers the 
desired conclusion also when $k$ is $C_i^*$.\end{proof}

\section{Solving inequalities via $C_i$-theory}
We now apply the theory of $C_i$-fields, due to Lang \cite{Lan1952} and Nagata 
\cite{Nag1957}, so as to bound the solutions of diophantine inequalities over 
function fields $k(t)$.

\begin{theorem}\label{theorem1.1} Let $k$ be a $C_i^*$-field. Suppose that 
$F_j(\bfx)\in \dbK_\infty [x_1,\ldots ,x_s]$ $(1\le j\le r)$ are Chevalley 
polynomials of degree at most $d$, whose coefficients have magnitude not 
exceeding the positive number $H$. Put $\Del=\deg F_1+\ldots +\deg F_r$, and 
suppose that $s>\Del d^i$. Then whenever $0<\eps\le \gam^{-d}H$, the system of 
inequalities
\begin{equation}\label{3.5}
\langle F_j(\bfx)\rangle <\eps\quad (1\le j\le r)
\end{equation}
admits a solution $\bfx\in k[\pt t\pt ]^s$ satisfying $0<\langle \bfx\rangle \le
 (H/\eps)^{rd^i/(s-\Del d^i)}$. The same conclusion holds for $C_i$-fields $k$ when 
the polynomials $F_j$ are forms.
\end{theorem}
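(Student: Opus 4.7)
The plan is to seek $\bfx$ among polynomials of bounded degree, rewrite the inequalities \eqref{3.5} as a system of Chevalley polynomial equations over $k$ in the coefficients of $\bfx$, and then invoke Lemma~\ref{lemma2.1}.

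First I would write $\bfx(t)=\sum_{n=0}^{N}\bfx_n t^n$ with $\bfx_n\in k^s$, where $N\ge 0$ is an integer to be chosen, treating the $s(N+1)$ coordinates $x_{i,n}$ as unknowns in $k$. Set $h=\lfloor\log_\gam H\rfloor$, so that the hypothesis $\langle\cdot\rangle\le H$ places each coefficient of $F_j$ in $t^h\,k[[1/t]]$. Substituting and grouping by powers of $t$ yields an identity
$$F_j(\bfx)=\sum_{m\le N\deg F_j+h}b_{j,m}\,t^m,$$
in which each $b_{j,m}$ is a polynomial of degree at most $\deg F_j$ in the variables $x_{i,n}$. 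Since $F_j$ is Chevalley, so too is each $b_{j,m}$; and in the $C_i$-setting, where the $F_j$ are forms, the rescaling $\bfx_n\mapsto\lam\bfx_n$ (for all $n$) combined with the homogeneity of $F_j$ shows $b_{j,m}$ to be itself a form of degree $\deg F_j$.

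Next, let $M$ denote the largest integer satisfying $\gam^M<\eps$. The inequality $\langle F_j(\bfx)\rangle<\eps$ amounts to $b_{j,m}=0$ for every $m$ with $M+1\le m\le N\deg F_j+h$. This yields a system of $N\Del+r(h-M)$ Chevalley equations (forms, in the $C_i$-case) of degree at most $d$ in $s(N+1)$ unknowns over $k$. Lemma~\ref{lemma2.1} then guarantees a non-trivial solution whenever
$$s(N+1)>\bigl(N\Del+r(h-M)\bigr)d^i,\qquad\text{i.e.,}\qquad N(s-\Del d^i)>r(h-M)d^i-s.$$
Since $s-\Del d^i>0$ by hypothesis, one may take $N$ to be the smallest non-negative integer satisfying this strict inequality.

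To close, one checks that $\langle\bfx\rangle=\gam^N\le (H/\eps)^{rd^i/(s-\Del d^i)}$. Using $h\le\log_\gam H$, $M>\log_\gam\eps-1$, and the hypothesis $\eps\le\gam^{-d}H$ (which keeps $h-M\ge d$ comfortably positive and lets the $-s$ on the right absorb the integer-rounding slack), a short calculation confirms the bound. The main obstacle lies solely in this bookkeeping with integer parts of $h$, $M$, and $N$; the conceptual content---reducing the diophantine inequality to a finite Chevalley system over $k$ and applying $C_i^*$-theory---is straightforward once one observes that the condition $\langle F\rangle<\eps$ is nothing but the vanishing of finitely many coefficients in the Laurent expansion of $F$.
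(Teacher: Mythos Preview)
Your proposal is correct and follows essentially the same route as the paper: write $\bfx$ with coefficients of bounded $t$-degree, expand each $F_j(\bfx)$ as a Laurent series whose high-order coefficients are Chevalley polynomials (forms, in the $C_i$-case) over $k$ in those coefficients, and apply Lemma~\ref{lemma2.1} with a variable count to force a non-trivial zero. The only minor slip is writing $\langle\bfx\rangle=\gam^N$ where $\langle\bfx\rangle\le\gam^N$ is meant, and the ``short calculation'' you defer is exactly the paper's bookkeeping (using $h-M\ge d$ from $\eps\le\gam^{-d}H$, then $(N-1)(s-\Del d^i)\le r(h-M)d^i-s$ together with $\Del\ge r$).
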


\begin{proof} We suppose that $k$ is a $C_i^*$-field, and that for $1\le j\le 
r$, the polynomial $F_j(\bfx)\in \dbK_\infty [x_1,\ldots ,x_s]$ is Chevalley of 
degree $d_j\le d$. Let $H$ be an upper bound for the magnitude of the non-zero 
coefficients occurring in $F_j(\bfx)$ $(1\le j\le r)$, and write $h$ for the 
largest integer for which $\gam^h\le H$. It follows that for $1\le j\le r$, the 
coefficients of $F_j(\bfx)$ each have degree at most $h$. We take $B$ to be a 
non-negative integer to be chosen later, and consider an $s$-tuple $(x_1,\ldots 
,x_s)\in k[\pt t\pt ]^s$ wherein each coordinate $x_n$ has $t$-degree $B$. Put
\begin{equation}\label{2.1}
x_n=y_{n0}+y_{n1}t+\ldots +y_{nB}t^B\quad (1\le n\le s),
\end{equation}
and consider the polynomial obtained by substituting this choice for $\bfx$ into
 $F_j(\bfx)$ $(1\le j\le r)$. Thus, for $1\le j\le r$, we obtain
\begin{equation}\label{2.2}
F_j(\bfx)=\sum_{m\le d_jB+h}G_{jm}(\bfy)t^m,
\end{equation}
where each polynomial $G_{jm}(\bfy)\in k[y_{10},\ldots ,y_{sB}]$ is Chevalley of 
degree at most $d_j$ for $1\le j\le r$. Let $M$ be the least integer for which 
$\gam^M>1/\eps$, so that $\gam^{M-1}\le 1/\eps$. We seek a non-trivial solution 
$\bfy \in k^{s(B+1)}$ to the system of equations
\begin{equation}\label{2.3}
G_{jm}(\bfy)=0\quad (-M<m\le d_jB+h,\, 1\le j\le r).
\end{equation}
In view of (\ref{2.2}), the $s$-tuple $\bfx\in k[\pt t\pt ]^s$, associated to 
$\bfy$ via (\ref{2.1}), provides a non-trivial solution to the system 
(\ref{3.5}).\par

The system (\ref{2.3}) consists of $d_jB+h+M$ equations of degree at most $d_j$,
 for $1\le j\le r$, in $s(B+1)$ variables. Since $k$ is presently supposed to be
 a $C_i^*$-field, we find from the first conclusion of Lemma \ref{lemma2.1} that
 the system (\ref{2.3}) possesses a non-trivial solution $\bfy \in k^{s(B+1)}$ 
whenever
\begin{equation}\label{2.5}
s(B+1)>d^i\sum_{j=1}^r(d_jB+h+M).
\end{equation}
Write $\Del =d_1+\ldots +d_r$. The hypotheses of the statement of the theorem 
permit us to assume that $\eps \le \gam^{-d}H$, which implies that $\gam^{-M}<\eps
 \le \gam^{-d}H<\gam^{h+1-d}$. We therefore have $h+M\ge d$, so that when $s>\Del 
d^i$, the condition (\ref{2.5}) is satisfied for a non-negative integral value 
of $B$ with $(s-\Del d^i)B\le rd^i(h+M)-\Del d^i$. In particular, there exists a
 non-trivial solution $\bfx \in k[\pt t\pt ]^s$ to the system (\ref{3.5}) with 
$$\langle \bfx \rangle^{s-\Del d^i}\le \gam^{-\Del d^i}(\gam^{h+M})^{rd^i}\le 
\gam^{(r-\Del)d^i}(H/\eps)^{rd^i}.$$
Since the lower bound $\Del\ge r$ follows from the hypotheses of the statement 
of the theorem, the first conclusion of Theorem \ref{theorem1.1} now follows. 
The second follows in like manner by making use of the final assertion of Lemma 
\ref{lemma2.1}.
\end{proof}

Theorem \ref{corollary1.2} is an immediate consequence of the last theorem, 
since $\dbF_q$ is a $C_1^*$-field. We remark that when $k$ is a $C_i^*$-field, 
and there are normic forms of order $i$ for each positive degree, then the 
conclusions of Theorem \ref{theorem1.1} may be sharpened. If one makes use of 
Lemma \ref{lemma2.2} in place of Lemma \ref{lemma2.1} in the above argument, 
then one may replace the constraint (\ref{2.5}) by the condition
$$s(B+1)>\sum_{j=1}^r(d_jB+h+M)d_j^i.$$
From here one finds that whenever $s>D_{i+1}(\bfF)$, a solution of the system 
(\ref{3.5}) exists for which $\langle \bfx\rangle \le (H/\eps)^{D_i(\bfF)/(s-D_{i+1}(
\bfF))}$. The same conclusion holds for $C_i$-fields when the polynomials $F_j$ 
are forms.\par

We have already remarked on the paucity of explicit results, in the classical 
rational case, for general homogeneous forms of higher degree. In the diagonal 
situation, on the other hand, much more is known, and one even has available 
reasonable bounds for the size of the smallest non-trivial solutions. Put 
$\rho(8)=15/8$ and $\rho(9)=1$. Also, let $s$ be either $8$ or $9$, and consider
 non-zero real numbers $\lam_1,\dots ,\lam_s$. Then it follows from work of 
Br\"udern \cite{Bru1996} that for each positive number $\eps$, and for any 
exponent $\rho$ exceeding $\rho(s)$, the inequality
$$|\lam_1x_1^3+\ldots +\lam_sx_s^3|<\eps$$
possesses an integral solution $\bfx$ satisfying
\begin{equation}\label{1.2}
0<|\lam_1x_1^3|+\ldots +|\lam_sx_s^3|\ll |\lam_1\ldots \lam_s|^\rho
(1/\eps)^{s\rho-1}.
\end{equation}
Sharper conclusions are available when the coefficients $\lam_i$ are integral. 
Indeed, Br\"udern \cite{Bru1994} shows that in such circumstances the exponent 
$\rho(8)=15/8$ may be replaced by $5/3$. We refer the reader to \cite{PR1967} 
for earlier work on this topic.\par

The argument that we employ to establish Theorem \ref{theorem1.1} is easily 
adapted to provide bounds of the shape (\ref{1.2}), and leads to the following 
conclusion.

\begin{theorem}\label{theorem1.3}
Let $k$ be a $C_i$-field, and let $s$ and $d$ be natural numbers with 
$s>d^{i+1}$. Put $\rho=1/(s-d^{i+1})$. Then whenever $\lam_j\in \dbK_\infty^\times$ 
$(1\le j\le s)$, and
\begin{equation}\label{3.6a}
0<\eps \le \gam^{-d}\langle \bflam \rangle^{1-s/d^{i+1}}\langle \lam_1\ldots \lam_s
\rangle^{1/d^{i+1}},
\end{equation}
the inequality
\begin{equation}\label{3.6}
\langle \lam_1x_1^d+\ldots +\lam_sx_s^d\rangle <\eps 
\end{equation}
possesses a solution $\bfx\in k[\pt t\pt ]^s$ satisfying
$$0<\max_{1\le n\le s}\langle \lam_nx_n^d\rangle <\gam^{d-1}\langle \lam_1\ldots 
\lam_s\rangle^\rho (1/\eps)^{s\rho -1}.$$
\end{theorem}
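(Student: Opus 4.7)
The plan is to adapt the proof of Theorem \ref{theorem1.1} to the diagonal setting, replacing the uniform degree bound $B$ with parameters $B_1,\ldots,B_s$ tailored individually to each $\lam_n$. This flexibility is exactly what is needed in order to capture the product $\langle\lam_1\cdots\lam_s\rangle$ in the conclusion, in place of the cruder $\langle\bflam\rangle^s$ that a uniform choice of $B$ would yield.

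In detail, I would set $h_n=\text{ord }\lam_n$ (so that $\gam^{h_n}=\langle\lam_n\rangle$), write $H^*=\max_n h_n$ and $P=h_1+\ldots+h_s$ (so that $\gam^{H^*}=\langle\bflam\rangle$ and $\gam^P=\langle\lam_1\cdots\lam_s\rangle$), and let $M$ be the least integer with $\gam^M>1/\eps$. For non-negative integers $B_1,\ldots,B_s$, write $x_n=\sum_{j=0}^{B_n}y_{nj}t^j$ and substitute into the left-hand side of (\ref{3.6}), obtaining
$$\lam_1x_1^d+\ldots+\lam_sx_s^d=\sum_{m\le L}G_m(\bfy)t^m,$$
where $L=\max_n(h_n+dB_n)$ and each $G_m$ is a form of degree $d$ in the variables $y_{nj}$. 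A solution to the inequality (\ref{3.6}) then follows from a non-trivial solution of the system $G_m(\bfy)=0$ for $-M<m\le L$, and Lemma \ref{lemma2.1} delivers such a $\bfy$ as soon as $\sum_n(B_n+1)>d^i(L+M)$. The key choice is to take $B_n=\lfloor(L-h_n)/d\rfloor$ (valid when $L\ge H^*$), which yields $\sum_n(B_n+1)\ge(sL-P+s)/d$ and reduces the existence condition to $L>\rho(d^{i+1}M+P-s)$. Taking $L$ to be the least integer satisfying this inequality and invoking $\gam^M\le\gam/\eps$ together with the identities $(s-d^{i+1})\rho=1$ and $d^{i+1}\rho=s\rho-1$, a direct calculation collapses the bound to
$$\max_n\langle\lam_nx_n^d\rangle\le\gam^L\le(1/\eps)^{s\rho-1}\langle\lam_1\cdots\lam_s\rangle^\rho,$$
from which the stated conclusion follows since $\gam^{d-1}\ge 1$.

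The main obstacle is to verify that under hypothesis (\ref{3.6a}), the $L$ produced by the above optimisation automatically satisfies $L\ge H^*$, so that all the $B_n$ are in fact non-negative. Taking logarithms of (\ref{3.6a}) translates this into a lower bound on $M$ which, when substituted into $\rho(d^{i+1}M+P-s)$, delivers a quantity of the shape $H^*+\rho(d^{i+2}-s)$; the trivial inequality $d^{i+2}-s\ge d^{i+1}-s=-1/\rho$ then provides just enough slack to absorb the rounding gap and ensure $\rho(d^{i+1}M+P-s)\ge H^*-1$, so that the least integer exceeding this quantity is at least $H^*$. Once this compatibility is in place, the remainder of the argument is a direct analogue of the calculation in the proof of Theorem \ref{theorem1.1}, with the product $\langle\lam_1\cdots\lam_s\rangle^\rho$ emerging naturally from the non-uniform choice of the $B_n$.
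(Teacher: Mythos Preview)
Your proposal is correct and follows essentially the same route as the paper: both arguments tailor the degree bound on $x_n$ to $\text{ord}\,\lam_n$ via $B_n=B+\lfloor(h-h_n)/d\rfloor$ (the paper) or, equivalently, $B_n=\lfloor(L-h_n)/d\rfloor$ with $L=dB+h$ (your version), and then invoke Lemma~\ref{lemma2.1} on the resulting system of degree-$d$ forms. Your parametrisation by $L$ rather than $B$ is marginally more flexible and even yields the bound without the factor $\gam^{d-1}$, but the underlying idea and computation are the same.
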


\begin{proof} We adopt an approach similar to that employed in our proof of 
Theorem \ref{theorem1.1}. Let $k$ be a $C_i$-field. For $1\le j\le s$, put 
$h_j=\text{ord}\,\lam_j$, and let $h=\max\{h_1,\ldots ,h_s\}$. We take $B$ to be 
a non-negative integer to be chosen in due course, and on this occasion we 
consider an $s$-tuple $(x_1,\ldots ,x_s)\in k[\pt t\pt ]^s$ with the property 
that for $1\le j\le s$, the polynomial $x_j$ has $t$-degree $B_j=B+[(h-h_j)/d]$.
 Here, as usual, we write $[\tet]$ for the largest integer not exceeding $\tet$.
 Taking
\begin{equation}\label{2.6a}
x_n=y_{n0}+y_{n1}t+\ldots +y_{nB_n}t^{B_n}\quad (1\le n\le s),
\end{equation}
we obtain the expression
\begin{equation}\label{2.6}
\lam_1x_1^d+\ldots +\lam_sx_s^d=\sum_{m\le dB+h}G_m(\bfy)t^m,
\end{equation}
where each polynomial $G_m(\bfy)\in k[y_{10},\ldots ,y_{sB_s}]$ is homogeneous of 
degree $d$. Let $M$ be the least integer for which $\gam^M>1/\eps$, so that 
$\gam^{M-1}\le 1/\eps$, and put $D=B_1+\ldots +B_s$. We seek a non-trivial 
solution $\bfy\in k^{D+s}$ to the system
\begin{equation}\label{2.7}
G_m(\bfy)=0\quad (-M<d\le dB+h).
\end{equation}
In view of (\ref{2.6}), the $s$-tuple $\bfx\in k[\pt t\pt]^s$, associated to 
$\bfy$ via the relations (\ref{2.6a}), then provides a non-zero solution of the 
inequality (\ref{3.6}).\par

The system (\ref{2.7}) consists of $dB+h+M$ homogeneous equations of degree $d$ 
in $D+s$ variables. Since $k$ is a $C_i$-field, we find from the second 
conclusion of Lemma \ref{lemma2.1} that the system (\ref{2.7}) possesses a 
non-trivial solution $\bfy\in k^{D+s}$ whenever $D+s>(dB+h+M)d^i$. This condition
 is equivalent to the constraint
$$s(B+1)+\sum_{j=1}^s[(h-h_j)/d]>d^{i+1}B+d^i(h+M),$$
which is to say
\begin{equation}\label{3.7}
(s-d^{i+1})B>d^i(h+M)-s-\sum_{j=1}^s[(h-h_j)/d].
\end{equation}
Since $\eps>\gam^{-M}$, the hypothesis (\ref{3.6a}) permits us to assume that 
$$h+M>d+\sum_{j=1}^s(h-h_j)/d^{i+1}.$$
It follows that the condition (\ref{3.7}) is satisfied for a non-negative 
integral value of $B$ with
$$(s-d^{i+1})B\le d^i(h+M)-d^{i+1}-\sum_{j=1}^s[(h-h_j)/d].$$
The last condition is satisfied with a value of $B$ satisfying the condition
\begin{align*}
(s-d^{i+1})(dB+h)&\le d^{i+1}M-d^{i+2}+sh-\sum_{j=1}^s(h-h_j)+s(d-1)\\
&\le d^{i+1}(M-1)+\sum_{j=1}^sh_j+(s-d^{i+1})(d-1).
\end{align*}
In particular, when $s>d^{i+1}$, there exists a solution $\bfx\in k[\pt t\pt]^s$ 
to the inequality (\ref{3.6}) with
$$0<\max_{1\le n\le s}\langle \lam_nx_n^d\rangle^{s-d^{i+1}}\le \langle \lam_1\dots 
\lam_s\rangle (\gam^{d-1})^{s-d^{i+1}}(\gam^{M-1})^{d^{i+1}},$$
and the conclusion of the theorem is now immediate.
\end{proof}   

\section{Lower bounds}
By adapting an argument employed by Cassels \cite{Cas1955} in his work on 
solutions of rational quadratic forms, we are able to derive lower bounds for 
the magnitude of non-trivial solutions of certain diophantine equations over 
$\dbF_q[t]$. Such lower bounds apply also, of course, to the solutions of 
corresponding diophantine inequalities.

\begin{theorem}\label{theorem1.4}
Let $k$ be a $C_i$-field, and suppose that a normic form of degree $d$ exists in
 $k[\bfx]$ with $D$ variables. Suppose also that $r$ is a natural number and 
$s>rdD$. Then there exist arbitrarily large numbers $H$, and systems of forms 
$F_j(\bfx;H)\in \dbK_\infty[x_1,\dots ,x_s]$ $(1\le j\le r)$ of degree $d$, 
satisfying the following properties:
\begin{enumerate}[(a)]
\item the coefficients of $F_1,\dots ,F_r$ each have magnitude not exceeding 
$H$, and
\item the smallest non-trivial solution $\bfx\in k[\pt t\pt]^s$ of the 
simultaneous inequalities $\langle F_j(\bfx;H)\rangle <1$ $(1\le j\le r)$ 
satisfies the bound $\langle \bfx\rangle \ge (\gam^{1-d}H)^{rD/(s-rdD)}$.
\end{enumerate}
\end{theorem}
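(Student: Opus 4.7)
The plan is to construct, for each integer $N\ge 1$, an explicit system of forms $F_1,\ldots,F_r$ that achieves the claimed lower bound, adapting Cassels's approach to the function-field setting. Writing $H = \gamma^h$, the parameters $N$ and $h$ will be linked by $h = N(s - rdD)/(rD) + d - 1$, so that $(\gamma^{1-d}H)^{rD/(s-rdD)} = \gamma^N$; the target is to arrange matters so that no non-trivial $\bfx$ with $\text{ord}(\bfx) < N$ satisfies the simultaneous inequalities $\langle F_j(\bfx;H)\rangle < 1$.

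Writing $v = s - rdD$, I would package the $s$ coordinates as $rd$ blocks $\bfy_{j,\ell}\in k[t]^D$ (for $1 \le j\le r$, $1\le \ell\le d$) together with a leftover block $\bfz$ of $v$ variables, and set
\[
F_j(\bfx;H) = \sum_{\ell=1}^d \alpha_{j,\ell}\,\Psi(\bfy_{j,\ell}) - G_j(\bfz),
\]
where $\alpha_{j,\ell}\in\dbK_\infty$ are scalars of magnitude at most $H$ and $G_j$ is an auxiliary degree-$d$ form in $\bfz$ with coefficients of magnitude at most $H$. The essential input from normicity is that for any $\bfy\ne 0$ in $k[t]^D$, one has $\langle \Psi(\bfy)\rangle = \langle \bfy\rangle^d$ exactly, and the leading coefficient of $\Psi(\bfy)\in k[t]$ is non-zero in $k$; this rigidity lets me translate the constraint $\langle F_j(\bfx;H)\rangle < 1$ into a system of precise cancellation conditions on the block values $\Psi(\bfy_{j,\ell})$ and $G_j(\bfz)$, rather than merely an analytic smallness condition.

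The heart of the argument is a Cassels-style counting. For each candidate $\bfx$ with $0 < \text{ord}(\bfx) < N$, I would bound the number of parameter tuples $(\alpha_{j,\ell}, G_j)$ for which every $F_j(\bfx;H)$ has negative order by a fixed codimension in the parameter space of admissible $(\alpha,G)$, the codimension being determined by how many coefficients of the Laurent expansion of $F_j(\bfx;H)$ must be killed. Summing over all $\bfx$ in the relevant range and comparing against the total dimension of the parameter space then shows that some choice of $(\alpha,G)$ admits no non-trivial small solution, yielding the required forms. The principal obstacle is the precise calibration of exponents: a naive count, treating each $F_j$ as an arbitrary form of magnitude $H$, gives only a weaker lower bound, so the sharp exponent $rD/(s-rdD)$ must be extracted by exploiting the normic rigidity in the degree-by-degree matching of Laurent coefficients, which cuts down the effective parameter count by precisely the factor needed to match the upper bound from Theorem~\ref{theorem1.1}. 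Ensuring that the auxiliary form $G_j$ on the leftover variables does not upset this calibration---particularly when $v$ is small---is the most delicate piece of bookkeeping in the argument.
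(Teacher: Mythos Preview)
Your proposal is not a proof but a plan, and it contains a gap you yourself flag: the ``naive count'' does not yield the sharp exponent, and you have not actually carried out the refined count that would. The difficulty is real. A union bound over all $\bfx$ with $\text{ord}(\bfx)<N$ involves on the order of $|k|^{sN}$ candidates (for finite $k$), and for the argument to succeed the codimension of bad parameters attached to each $\bfx$ must be calibrated against this; extracting precisely the exponent $rD/(s-rdD)$ from that balance requires more than the appeal to ``normic rigidity'' you offer, and the auxiliary forms $G_j$ on the leftover block only make the bookkeeping worse, as you concede. Over infinite $k$ the counting framework does not even make literal sense, and you give no substitute. In short, the hard step---the one that distinguishes the sharp bound from a weaker one---is left entirely open.

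The paper's argument is entirely different and sidesteps all of this: it is an explicit construction, not an existence argument by averaging. One takes the normic form $\Psi$ in $D$ variables, builds polynomials
\[
\Phi_m(\bfu)=\sum_{j=0}^{d-1}t^j\,\Psi(u_{mdD+jD+1},\ldots,u_{mdD+jD+D})\quad(0\le m<r),
\]
and then substitutes for each $u_l$ a linear form $L_l(\bfx)$ in the $s$ original variables. The coefficients of the $L_l$ are chosen as products of pairwise distinct monic irreducibles, arranged so that $L_l(\bfx)=0$ forces specific divisibility of the coordinates of $\bfx$. Because $\Psi$ is normic and the $t^j$ separate the blocks, the $\Phi_m$ have no non-trivial zero over $k[t]^\Del$; hence any solution of the system $\Phi_m(L_1(\bfx),\ldots,L_\Del(\bfx))=0$ must satisfy $L_l(\bfx)=0$ for every $l$, and the accumulated divisibility then forces $\langle\bfx\rangle\ge(\gam^{\del h})^\Del$. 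Comparing this with the coefficient height $H=\gam^{d-1}(\gam^{\del h})^{d(s-\Del)}$ gives the stated bound directly, with no counting, no case analysis on the leftover block, and no distinction between finite and infinite $k$.
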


\begin{proof} We seek polynomials $F_j(\bfx;H)$ $(1\le j\le r)$ having 
coefficients lying in $k[\pt t\pt]$. Given such polynomials, the system of 
inequalities $\langle F_j(\bfx;H)\rangle <1$ $(1\le j\le r)$ has a non-trivial 
solution $\bfx \in k[\pt t\pt ]^s$ if and only if the system of equations 
$F_j(\bfx;H)=0$ $(1\le j\le r)$ has a non-trivial solution $\bfx\in k[\pt 
t\pt]^s$. Let $i$ be a non-negative integer, and consider a $C_i$-field $k$ 
which admits a normic form $\Psi(x_1,\dots ,x_D)$ of degree $d$. Note that 
whenever $\bfx\in k[\pt t\pt ]^D$, it follows that $\Psi(\bfx)=0$ if and only if
 $\bfx={\bf 0}$. For the sake of convenience, write $\Del=rdD$. We define the 
polynomials $\Phi_m(\bfx)\in \dbK[x_1,\dots ,x_\Del]$ by putting
\begin{equation}\label{3.0}
\Phi_m(\bfx)=\sum_{j=0}^{d-1}t^j\Psi(x_{mdD+jD+1},\dots ,x_{mdD+jD+D})\quad (0\le m
<r),\end{equation}
and observe that the polynomials $\Phi_m(\bfx)$ $(0\le m<r)$ have coefficients 
lying in $k[\pt t\pt]$. In view of our earlier observation, these polynomials 
have the property that, when $\bfx\in k[\pt t\pt]^\Del$, one has $\Phi_m(\bfx)=0$
 $(0\le m<r)$ if and only if $\bfx={\bf 0}$.\par

Now let $s$ be an integer with $s>\Del$, and let $h$ be a natural number. We 
claim that there exists a positive integer $\del$ having the property that 
there exist at least $h\Del (s-\Del +1)$ distinct monic irreducible polynomials 
in $k[\pt t\pt ]$ of degree $\del$. When $k$ has infinitely many elements, our 
claim follows with $\del=1$ by considering polynomials of the shape $t+\lam$, 
with $\lam \in k$. When $k$ is a finite field, on the other hand, then $k$ is 
isomorphic to $\dbF_q$ for some prime power $q$, and so it suffices to consider 
polynomials of degree sufficiently large in terms of $h$, $\Del$ and $s$. We may
 therefore take distinct monic irreducible polynomials
$$\pi_{uwl}\in k[\pt t\pt]\quad (1\le u\le \Del,\, 0\le w\le s-\Del ,\, 1\le 
l\le h),$$
each of degree $\del$. When $1\le u\le \Del$ and $0\le w\le s-\Del$, write
$$\varpi_{uw}=\prod_{1\le l\le h}\pi_{uwl},$$
put
\begin{equation}\label{3.1}
a_{uv}=\prod_{\substack{0\le w\le s-\Del \\ w\ne v}}\varpi_{uw}\quad (1\le u\le \Del,\, 
0\le v\le s-\Del ),
\end{equation}
and consider the linear forms
\begin{equation}\label{3.2}
L_u(\bfx)=a_{u0}x_{s-\Del+u}+\sum_{v=1}^{s-\Del }a_{uv}x_v\quad (1\le u\le \Del ).
\end{equation}
An examination of the definitions (\ref{3.1}) and (\ref{3.2}) reveals that 
whenever $L_u(\bfx)=0$, then necessarily $\varpi_{u0}|x_{s-\Del +u}$ and 
$\varpi_{uv}|x_v$ $(1\le v\le s-\Del )$.\par

We now seek a non-trivial solution $\bfx\in k[\pt t\pt ]^s$ of the system of 
equations
\begin{equation}\label{3.3}
\Phi_m(L_1(\bfx),\ldots ,L_\Del (\bfx))=0\quad (0\le m<r).
\end{equation}
From the discussion in the opening paragraph of this proof, we find that the 
system (\ref{3.3}) has a non-trivial solution $\bfx\in k[\pt t\pt]^s$ if and 
only if the same holds for the system
$$L_1(\bfx)=\ldots =L_\Del (\bfx)=0.$$
This is a system of $\Del$ homogeneous linear equations in the variables $x_1,
\dots ,x_s$. Since, by hypothesis, we have $s>\Del$, this system of equations 
has a non-trivial solution $\bfx\in k[\pt t\pt]^s$. If one were to have 
$x_1=\ldots =x_{s-\Del }=0$, then it would follow from (\ref{3.2}) 
that $x_{s-\Del +u}=0$ for $1\le u\le \Del $. The latter implies that $\bfx={\bf 
0}$, contradicting the non-triviality of $\bfx$. Consequently, there exists an 
integer $v$, with $1\le v\le s-\Del $, for which $x_v\ne 0$. But the conclusion 
of the previous paragraph then implies that $\varpi_{uv}|x_v$ $(1\le u\le \Del)$,
 whence $x_v$ is divisible by the polynomial $\varpi_{1v}\ldots \varpi_{\Del v}$. 
We thus deduce that any non-trivial solution of the system (\ref{3.3}) satisfies
\begin{equation}\label{3.4}
\langle \bfx\rangle \ge \langle \varpi_{1v}\ldots \varpi_{\Del v}\rangle 
=(\gam^{\del h})^\Del .
\end{equation}

\par The polynomial $\Psi(y_1,\ldots ,y_D)$ has coefficients from $k$, and the 
magnitude of each of the non-zero coefficients of the linear forms $L_u(\bfx)$ 
is precisely $(\gam^{\del h})^{s-\Del}$. Thus, considered as a polynomial in $\dbK
[\bfx]$ with coefficients lying in $k[\pt t\pt]$, the size of the coefficient of
 greatest magnitude within the system of polynomials
$$\Psi(L_{mdD+jD+1}(\bfx),\ldots ,L_{mdD+jD+D}(\bfx))\quad (0\le m<r,\, 0\le j<d)$$
is at most $(\gam^{\del h})^{d(s-\Del)}$. From (\ref{3.0}), it therefore follows 
that the size of the coefficient of greatest magnitude within the polynomials 
$\Phi_m(\bfx)$ $(0\le m<r)$ is at most $H=\gam^{d-1}(\gam^{\del h})^{d(s-\Del)}$. On 
recalling that $\Del=rdD$, a comparison with (\ref{3.4}) reveals that any 
non-trivial solution of the system (\ref{3.3}) satisfies
$$\langle \bfx\rangle \ge (\gam^{1-d}H)^{\Del/(d(s-\Del))}=(\gam^{1-d}H)^{rD/(s-rdD)}.$$
This completes the proof of the theorem.
\end{proof}

In the finite field $\dbF_q$, there exists a normic form of degree $d$, in $d$ 
variables, for every positive integer $d$. The conclusion of Theorem 
\ref{corollary1.4} therefore follows at once. We note that Cassels 
\cite{Cas1955} has established analogous lower bounds in the classical situation
 for rational zeros of a quadratic form. One should observe, however, that in 
Cassels' work, the integer $D$ may be taken arbitrarily large, owing to the 
existence of definite forms in any given number of variables. There is also 
related work of Masser \cite{Mas1998} concerning integral zeros of quadratic 
polynomials.\par

\section{Oddly $C_i$-fields}
We refer to a polynomial having no monomials of even degree as an {\it odd 
Chevalley polynomial}. Motivated by work of Lang concerning the theory of real 
places (see \S3 of \cite{Lan1953}), we say that $k$ is an {\it oddly 
$C_i^*$}-field when any odd Chevalley polynomial lying in $k[\bfx]$, having more
 than $d^i$ variables, necessarily possesses a non-trivial $k$-rational zero. 
When such holds only for {\it forms} of odd degree, we say instead that $k$ is 
{\it oddly $C_i$}. A field $k$ is called {\it real} if $-1$ cannot be expressed 
as a sum of squares in $k$. The field $k$ is described as {\it real closed} when
 it is maximal with respect to this property in its algebraic closure. Thus, the
 field of real numbers $\dbR$ is both real closed and oddly $C_0^*$. Also, a 
generalisation of the Corollary to Theorem 15 of Lang \cite{Lan1953}\footnote{
Here we have noted the transparent typographic error in the statement of this 
corollary.} shows that the function field $\dbR(t_1,\dots ,t_n)$ is oddly 
$C_n^*$. We briefly sketch below how to establish an odd version of Theorem 
\ref{theorem1.1}.

\begin{theorem}\label{theorem1.5} Modify Theorem \ref{theorem1.1} so that when 
$i\ge 1$, the assumption that $k$ be a $C_i^*$-field is replaced by the 
hypothesis that it be oddly $C_i^*$, and likewise in the absence of asterisk 
decorations. In addition, replace the assumption that $k$ be a $C_0^*$-field by 
the hypothesis that it be real closed. Also, let $F_j(\bfx)\in \dbK_\infty 
[x_1,\dots ,x_s]$ be odd Chevalley polynomials of degree at most $d$. Then, 
under the remaining hypotheses of Theorem \ref{theorem1.1}, one has the same 
conclusions.
\end{theorem}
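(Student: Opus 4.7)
My plan is to parallel the proof of Theorem \ref{theorem1.1} throughout, replacing Lemma \ref{lemma2.1} at the crucial step by an odd analogue suited to the hypotheses imposed on $k$. The key observation enabling this transplant is that the substitution $x_n = y_{n0} + y_{n1}t + \cdots + y_{nB}t^B$ from (\ref{2.1}) preserves the odd Chevalley property. Indeed, if $F_j(\bfx) = \sum_{\bfu} \alpha_{\bfu} \bfx^{\bfu}$ is odd Chevalley, then each exponent vector $\bfu$ satisfies $|\bfu| = u_1 + \cdots + u_s$ odd; upon expanding $\prod_n (\sum_b y_{nb}t^b)^{u_n}$, every resulting term is a monomial in $\bfy$ of the same total degree $|\bfu|$. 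Collecting powers of $t$ as in (\ref{2.2}), each coefficient polynomial $G_{jm}(\bfy)$ is therefore itself an odd Chevalley polynomial in $\bfy$ of degree at most $d_j$.

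With this observation secured, the argument proceeds exactly as in the proof of Theorem \ref{theorem1.1}: one seeks a non-trivial solution $\bfy \in k^{s(B+1)}$ of the odd Chevalley system (\ref{2.3}), whose counting of equations against variables is unchanged. The arithmetic leading to the bound on $\langle \bfx \rangle$ depends only on the degree data, not on the precise shape of the equations, so the same bound on $\langle \bfx \rangle$ emerges. The substantive modification is that Lemma \ref{lemma2.1} must be replaced by its odd counterpart: for $i \ge 1$ and $k$ oddly $C_i^*$ (or oddly $C_i$ when the $F_j$ are odd forms), every system of $r$ odd Chevalley polynomials (resp.\ odd forms) of degree at most $d$ in $s > rd^i$ variables possesses a non-trivial $k$-rational zero; and for $i = 0$ under the hypothesis that $k$ is real closed, the analogous statement for $s > r$ variables.

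The principal obstacle is establishing this odd analogue of Lemma \ref{lemma2.1}, that is, an odd version of Nagata's systems theorem. For $i \ge 1$, I anticipate that the inductive reduction of \cite{Nag1957} goes through in the odd setting because the operations it employs---taking linear combinations and substituting normic forms of suitable odd degree---preserve odd total degree, so the consolidated polynomial remains odd Chevalley and may be handled by applying the oddly $C_i^*$ hypothesis to a single polynomial. For $i = 0$ under the real closed hypothesis, the systems statement reduces via analogous manipulations to the classical fact that an odd-degree polynomial in one variable over a real closed field has a root, a consequence of the intermediate value property. Verification of these reductions is the only substantive content beyond the model provided by \S3 of \cite{Lan1953}, which is presumably why the authors leave the proof as a brief sketch.
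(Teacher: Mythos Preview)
Your overall architecture is right, and in particular the observation that the substitution (\ref{2.1}) sends odd Chevalley polynomials in $\bfx$ to odd Chevalley polynomials $G_{jm}(\bfy)$ is the correct starting point; the paper's proof proceeds in exactly this way for $i\ge 1$, invoking Lang's Theorems 12 and 15 (reworked \`a la Nagata to avoid normic-form hypotheses) in place of Lemma \ref{lemma2.1}. So for $i\ge 1$ your proposal matches the paper.

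The genuine gap is in the real closed case $i=0$. You assert that the systems statement---$r$ odd Chevalley polynomials in $s>r$ variables over a real closed field have a non-trivial common zero---``reduces via analogous manipulations'' to the intermediate value theorem for a single odd-degree polynomial in one variable. But the Nagata-style manipulations you have in mind combine several equations into one by substituting them into a normic form, and over a real closed field the only \emph{odd} normic forms of order $0$ are one-variable forms $cx^d$, which cannot amalgamate several equations; using an even normic form such as a sum of squares would destroy the odd Chevalley property. No elementary reduction of this kind is available: the assertion that $r$ odd forms in $r+1$ real variables have a common non-trivial zero is essentially the Borsuk--Ulam theorem for polynomial maps $S^r\to k^r$. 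The paper accordingly invokes Knebusch's algebraic Borsuk--Ulam theorem \cite{Kne1982} at this step, finding $\bfa$ on the unit sphere with $F_j(\bfa)=F_j(-\bfa)$ and hence $F_j(\bfa)=0$. Without this input your argument for $i=0$ does not close.
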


In our proof of Theorem \ref{theorem1.5}, we can afford to be relatively 
informal, the hard work having already been accomplished. The proof of the 
second conclusion of Theorem \ref{theorem1.5} follows in precisely the same 
manner as that of Theorem \ref{theorem1.1}, substituting Theorems 12 and 15 of 
Lang \cite{Lan1953} in place of Lemma \ref{lemma2.1}. In order to avoid 
hypotheses concerning the existence of normic forms in such an argument, one 
should modify the proof of Theorem 12 of \cite{Lan1953} along the lines of the 
proof of Theorem 1a of Nagata \cite{Nag1957}. For the corresponding conclusion 
on oddly $C_i^*$-fields with $i\ge 1$, one may proceed in like manner. When $k$ 
is real closed, it remains to verify that any system of $r$ odd Chevalley 
polynomials, in more than $r$ variables, possesses a non-trivial zero. This we 
achieve by means of an application of an algebraic version of the Borsuk-Ulam 
theorem. Let $F_j(\bfx)\in k[x_1,\dots ,x_s]$ $(1\le j\le r)$ be odd Chevalley 
polynomials, and suppose that $s>r$. By setting $x_j=0$ for $r+1<j\le s$, we may
 suppose without loss that $s=r+1$. The map $f:k^s\rightarrow k^r$, defined by 
taking $f(\bfx)=(F_1(\bfx),\dots ,F_r(\bfx))$, maps the $r$-dimensional sphere, 
defined by the equation $x_1^2+\dots +x_s^2=1$, into $k^r$. Then by the algebraic
 version of the Borsuk-Ulam theorem (see Knebusch \cite{Kne1982}), there exists 
a point $\bfa\in k^s$ with $a_1^2+\dots +a_s^2=1$ for which $F_j(\bfa)=0$ $(1\le 
j\le r)$. This is achieved, in fact, by finding such a point with $F_j(\bfa)=
F_j(-\bfa)$ $(1\le j\le r)$. Not only does this confirm our earlier assertion, 
but by utilising the discussion surrounding the Corollary to Theorem 15 of Lang 
\cite{Lan1953}, one finds also that a function field in $n$ variables over a 
real closed field is oddly $C_n^*$. \par

The refinements to Theorem \ref{theorem1.1} described in its sequel apply, 
mutatis mutandis, to the conclusions of Theorem \ref{theorem1.5}.\par

\section{Distribution modulo $k[\pt t\pt ]$}
A simple modification of the argument employed in the proof of Theorem 
\ref{theorem1.1} delivers a result on the distribution of polynomials modulo 
$k[\pt t\pt]$.

\begin{theorem}\label{theorem1.6}
Let $k$ be a $C_i^*$-field, and suppose that for $1\le j\le r$, the polynomial 
$F_j(\bfx)\in \dbK_\infty[x_1,\dots ,x_s]$ is Chevalley of degree at most $d$. 
Then for each positive number $N$, the simultaneous inequalities
$$\llangle F_j(\bfx)\rrangle <N^{-s/(rd^i)}\quad (1\le j\le r)$$
possess a non-trivial solution $\bfx \in k[\pt t\pt ]^s$ with $\langle \bfx
\rangle \le N$. When $k$ is a $C_i$-field, the same conclusion holds provided 
that the polynomials $F_j(\bfx)$ are forms.
\end{theorem}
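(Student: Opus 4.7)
The plan is to mirror the substitution argument from the proof of Theorem \ref{theorem1.1}, changing only which coefficient polynomials we force to vanish so as to translate an inequality of type $\llangle F_j(\bfx)\rrangle <\eps$ into a solvable polynomial system. The key observation is that when $\alp=\sum_{j\le n}a_jt^j\in \dbK_\infty$, the polynomial part $\sum_{0\le j\le n}a_jt^j$ lies in $k[\pt t\pt]$, so $\llangle \alp\rrangle$ is precisely the magnitude of the tail $\sum_{j<0}a_jt^j$. Consequently, forcing the coefficients of $t^{-1},t^{-2},\ldots ,t^{-M+1}$ in $F_j(\bfx)$ to vanish already guarantees $\llangle F_j(\bfx)\rrangle \le \gam^{-M}$.

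With this in hand, set $B=\lfloor \log_\gam N\rfloor$ and write
$$x_n=y_{n0}+y_{n1}t+\ldots +y_{nB}t^B\quad (1\le n\le s).$$
Expanding as in \eqref{2.2}, one obtains
$$F_j(\bfx)=\sum_{m\le d_jB+h}G_{jm}(\bfy)t^m,$$
in which each $G_{jm}(\bfy)\in k[y_{10},\ldots ,y_{sB}]$ is Chevalley of degree at most $d_j\le d$, since $F_j({\bf 0})=0$ forces $G_{jm}({\bf 0})=0$; when the $F_j$ are forms, the $G_{jm}$ are instead homogeneous of degree $d_j$. Let $M$ be the least positive integer with $\gam^M>N^{s/(rd^i)}$, and impose the $r(M-1)$ equations
$$G_{jm}(\bfy)=0\quad (-M<m\le -1,\, 1\le j\le r)$$
in the $s(B+1)$ variables $\bfy$. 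By the first assertion of Lemma \ref{lemma2.1} one obtains a non-trivial $k$-rational solution $\bfy$ whenever $s(B+1)>r(M-1)d^i$; the corresponding assertion for $C_i$-fields covers the case in which the $F_j$ are forms.

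It remains to verify this numerical inequality. The choice of $M$ gives $\gam^{M-1}\le N^{s/(rd^i)}$, hence $r(M-1)d^i\le s\log_\gam N$; the choice of $B$ gives $B+1>\log_\gam N$, whence $s(B+1)>s\log_\gam N\ge r(M-1)d^i$, as required. Any non-trivial $\bfy$ produced by Lemma \ref{lemma2.1} then corresponds, via the substitution, to a non-trivial $\bfx\in k[\pt t\pt]^s$ satisfying $\langle \bfx\rangle \le \gam^B\le N$ and $\llangle F_j(\bfx)\rrangle \le \gam^{-M}<N^{-s/(rd^i)}$ for each $j$. The only genuinely new input beyond the proof of Theorem \ref{theorem1.1} is the identification of the correct family of coefficients to annihilate; the balance between equations and unknowns is again as tight as possible, but just sufficient.
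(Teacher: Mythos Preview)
Your argument is correct and follows the same route as the paper: substitute polynomials of bounded $t$-degree, annihilate the coefficients $G_{jm}$ with $-M<m<0$, and invoke Lemma~\ref{lemma2.1} to verify the count $s(B+1)>r(M-1)d^i$. The only cosmetic difference is that the paper fixes $B$ first and then chooses the largest admissible $M$, whereas you fix both $B$ and $M$ in terms of $N$ and check the inequality directly; either way the arithmetic is the same.
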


\begin{proof} The proof of Theorem \ref{theorem1.6} is swiftly accomplished by 
means of the argument of the proof of Theorem \ref{theorem1.1}. With the same 
notation as that employed in the latter, we seek a non-trivial solution $\bfy 
\in k^{s(B+1)}$ to the system of Chevalley polynomial equations
\begin{equation}\label{2.9}
G_{jm}(\bfy)=0\quad (-M<m<0,\, 1\le j\le r),
\end{equation}
in place of (\ref{2.3}). In view of (\ref{2.2}), the element $\bfx\in k[\pt 
t\pt ]^s$, associated to $\bfy$ via the relations (\ref{2.1}), provides a 
non-trivial solution of the system of inequalities
\begin{equation}\label{2.10}
\llangle F_j(\bfx)\rrangle \le \gam^{-M}\quad (1\le j\le r).
\end{equation}

\par The system (\ref{2.9}) consists of $M-1$ equations of degree at most $d$, 
for $1\le j\le r$, in $s(B+1)$ variables. Since we may currently suppose $k$ to 
be a $C_i^*$-field, we find from the first conclusion of Lemma \ref{lemma2.1} 
that the system (\ref{2.9}) possesses a non-trivial solution $\bfy\in k^{s(B+1)}$ 
whenever
$$s(B+1)>d^i\sum_{j=1}^r(M-1)=(M-1)rd^i.$$
This condition is satisfied for an integral value of $M$ satisfying $rd^iM\ge 
s(B+1)$, and hence the system (\ref{2.10}) has a non-trivial solution $\bfx\in 
k[\pt t\pt ]^s$ with $\langle \bfx\rangle \le \gam^B$ and $\gam^{-M}\le 
(\gam^{B+1})^{-s/(rd^i)}$. The first conclusion of Theorem \ref{theorem1.6} follows 
on taking $B$ to be the largest non-negative integer satisfying $\gam^B\le N$, 
since then we have $(\gam^{B+1})^{-1}<N^{-1}$. The second conclusion of the theorem
 follows in a similar manner.
\end{proof}

Making use, once again, of the fact that $\dbF_q$ is a $C_1^*$-field, we derive 
the consequence of Theorem \ref{theorem1.6} recorded in Theorem 
\ref{corollary1.7}. Finally, we note that a conclusion analogous to that of 
Theorem \ref{theorem1.6} follows for oddly $C_i^*$-fields, and for oddly 
$C_i$-fields, provided that the polynomials $F_j(\bfx)$ are respectively odd 
Chevalley polynomials, and forms of odd degree.

\bibliographystyle{amsbracket}

\begin{thebibliography}{28}

\bibitem{Bak1982}
R. C. Baker, \emph{Weyl sums and Diophantine approximation}, J. London Math. 
Soc. (2) \textbf{25} (1982), 25--34.

\bibitem{Bak1986}
R. C. Baker, \emph{Diophantine inequalities}, London Math. Soc. Monographs, 
Oxford University Press, Oxford, 1986.

\bibitem{Bak1992}
R. C. Baker, \emph{Correction to: ``Weyl sums and Diophantine approximation''}, 
J. London Math. Soc. (2) \textbf{46} (1992), 202--204. 

\bibitem{Bir1957}
B. J. Birch, \emph{Homogeneous forms of odd degree in a large number of 
variables}, Mathematika \textbf{4} (1957), 102--105.

\bibitem{Bru1994}
J. Br\"udern, \emph{Small solutions of additive cubic equations}, J. London 
Math. Soc. (2) \textbf{50} (1994), 25--42.

\bibitem{Bru1996}
J. Br\"udern, \emph{Cubic Diophantine inequalities, II}, J. London Math. Soc. 
(2) \textbf{53} (1996), 1--18.

\bibitem{Cas1955}
J. W. S. Cassels, \emph{Bounds for the least solutions of homogeneous quadratic 
equations}, Proc. Cambridge Philos. Soc. \textbf{51} (1955) 262--264.

\bibitem{Che1936}
C. Chevalley, \emph{D\'emonstration d'une hypoth\`ese de M. Artin}, Abh. Math. 
Sem. Hamburg Univ. \textbf{11} (1936), 73--75.

\bibitem{Fre2004}
D. E. Freeman, \emph{Systems of cubic Diophantine inequalities}, J. Reine Angew.
 Math. \textbf{570} (2004), 1--46.

\bibitem{Hsu1999}
C.-N. Hsu, \emph{Diophantine inequalities for polynomial rings}, J. Number 
Theory \textbf{78} (1999), 46--61.

\bibitem{Hsu2001}
C.-N. Hsu, \emph{Diophantine inequalities for the non-Archimedean line 
$\dbF_q(\mpt (1/T)\mpt )$}, Acta Arith. \textbf{97} (2001), 253--267.

\bibitem{Kne1982}
M. Knebusch, \emph{An algebraic proof of the Borsuk-Ulam theorem for polynomial 
mappings}, Proc. Amer. Math. Soc. \textbf{84} (1982), 29--32.

\bibitem{Lan1952}
S. Lang, \emph{On quasi-algebraic closure}, Ann. of Math. (2) \textbf{55} 
(1952), 373--390.

\bibitem{Lan1953}
S. Lang, \emph{The theory of real places}, Ann. of Math. (2) \textbf{57} (1953),
 378--391.

\bibitem{Mas1998}
D. W. Masser, \emph{How to solve a quadratic equation in rationals}, Bull. 
London Math. Soc. \textbf{30} (1998), 24--28. 

\bibitem{Nag1957}
M. Nagata, \emph{Note on a paper of Lang concerning quasi-algebraic closure}, 
Mem. Coll. Sci. Univ. Kyoto, Ser. A. Math. \textbf{30} (1957), 237--241.

\bibitem{PR1967}
J. Pitman and D. Ridout, \emph{Diagonal cubic equations and inequalities}, Proc.
 Roy. Soc. London Ser. A \textbf{297} (1967), 476--502.

\bibitem{Sch1977}
W. M. Schmidt, \emph{On the distribution modulo $1$ of the sequence $\alp n^2+
\bet n$}, Canad. J. Math. \textbf{29} (1977), 819--826.

\bibitem{Sch1980}
W. M. Schmidt, \emph{Diophantine inequalities for forms of odd degree}, Adv. 
in Math. \textbf{38} (1980), 128--151.

\bibitem{Spe2007}
C. V. Spencer, \emph{Diophantine inequalities in function fields}, Bull. London 
Math. Soc. \textbf{41} (2009), 341--353.

\bibitem{Vin1927}
I. M. Vinogradov, \emph{Analytischer Beweis des Satzes \"uber die Verteilung der
 Bruchteile eines ganzen Polynoms}, Bull. Acad. Sci. USSR (6) \textbf{21} 
(1927), 567--578.

\bibitem{War1936}
E. Warning, \emph{Bemerkung zur vorstehenden Arbeit von Herrn Chevalley}, Abh. 
Math. Sem. Hamburg Univ. \textbf{11} (1936), 76--83.

\bibitem{Woo1992}
T. D. Wooley, \emph{On Vinogradov's mean value theorem}, Mathematika \textbf{39}
 (1992), 379--399.

\end{thebibliography}
\providecommand{\bysame}{\leavevmode\hbox to3em{\hrulefill}\thinspace}

\end{document}